\documentclass[12pt,reqno]{article}
\usepackage{amssymb, amsmath}
\topmargin -1cm \textheight 22.8cm \evensidemargin 0cm
\oddsidemargin 0cm \textwidth 16.8cm \flushbottom
\usepackage{amssymb,amsthm}
\newtheorem{Theorem}{Theorem}
\newtheorem{Remark}{Remark}
\newtheorem{Lemma}{Lemma}
\newtheorem{Proposition}{Proposition}
\newtheorem{Corollary}{Corollary}

\numberwithin{Definition}{section}
\numberwithin{Theorem}{section}
\numberwithin{Lemma}{section}
\numberwithin{Proposition}{section}
\newtheorem{Example}{Example}
\numberwithin{Corollary}{section}
\numberwithin{Example}{section}
\numberwithin{Remark}{section}
\begin{document}
\title{Differentiability of Distance Function and The  Proximinal Condition implying Convexity}
\author{Triloki Nath\thanks{The author is supported by  UGC, Govt. of India, New Delhi-110012, under UGC-BSR Start-Up Grant No. F.30.12/2014(BSR), dated $22^{nd}$ July, 2014.} }
\maketitle
\begin{abstract}
  We establish a necessary and sufficient condition for the differentiability of the distance function generated by a nonempty closed set $K$ in a real  normed linear space $X$  under a proximinality condition on $K$. We do not assume the uniform differentiability constraints on the norm of the space as in Giles \cite{Giles1}. Hence,  our result advances that of Giles \cite{Giles1}. We prove that the proximinal condition of Giles \cite{Giles1} is true for almost suns.  The proximinal condition ensures  convexity of an almost sun in some class of strongly smooth spaces under a differentiability condition of the distance function. A necessary and sufficient condition is obtained for the convexity of Chebyshev sets in  Banach spaces with  rotund dual.

\end{abstract}
\textbf{Keywords.}
 Distance function; Proximinal set; Differentiability; Generalized subdifferential; Almost sun; Chebyshev set.
\section{Introduction}\label{sec:intro}
Let $X$ be a real normed linear space and $X^{\ast}$ be its dual space. For a nonempty closed set $K$ in $X$, we define its distance function $d_K$ on $X$ by $$d_K (x)= \inf \left\{\left\|x-k\right\| : k\in K\right\}.$$ This function is not necessarily everywhere differentiable but it is (globally) Lipschitz, with the Lipschitz constant equal to 1. The metric projection of $x$ onto $K$ is $$P_K (x)= \left\{k \in K : \left\|x-k\right\| = d_K(x)\right\}.$$ The set $K$ is called \emph{proximinal (Chebyshev)} if for every  $x \in  X \diagdown K, P_K (x) $ is nonempty (singleton). $K$ will be called \emph{almost proximinal} if $P_K (x) $ is nonempty for a dense set of $ x \in X \diagdown K.$
The important concept of a \emph{sun} in abstract approximation theory was first introduced by Efimov and Stechkin \cite{Eff-Stech}. For a nonempty set $K$ in a normed linear space $X$, a point $x \in X \diagdown K$ is called a \emph{solar point} of $K$ if there exists a point $p(x)\in P_K (x)$ such that the points  $x + t (x-p(x))$  have $p(x)$  as a nearest point for all $t \geqslant 0$.  A proximinal set $K$ in a normed linear space $X$ is a \emph{sun}  if  $ X \diagdown K$ is the set of solar points.  An almost proximinal set $K$ will be called an \emph{almost sun} if a dense set of $X \diagdown K$ is the set of solar points.

Dutta \cite{Dutt1} has deduced that if the norm on $X$ is \emph{locally uniformly
convex}  (LUR), defined in section \ref{sec:MResl}, and \emph{$($Fr\'echet$)$ smooth}, then the  (Fr\'echet) smoothness of the distance function $d_{K}$ generated by an almost proximinal set $K$ is generic on $X \diagdown K.$ Further,  if norms on $X$ and $X^{\ast}$ are LUR, then he characterized the convexity of Chebyshev sets in terms of the Clarke generalized subdifferential of the distance function. His  technique is based on the observation of the denseness of the set $T'(K)$, where  $T'(K)$ denotes the set of points in $X \diagdown K$ for which every minimizing sequence in $K$ converges to a unique nearest point. A sufficient condition for $T'(K)$ to be dense is the local uniform convexity of the norm on $X$. We will use this result to improve the results of Giles \cite{Giles1}.

 In a normed linear space $X$, Giles \cite{Giles1} assumed a proximinal condition  on a nonempty closed set
$K,$ which has the property that for some $r > 0$  there exists a set of points
$x_{\circ} \in X  \diagdown K$ which have nearest points $ p(x_{\circ}) \in K $ with  $d_K(x_{\circ}) > r $ such that the set of points $\displaystyle x_{\circ}- r \frac{x_{\circ}-p(x_{\circ})}{\| x_{\circ}-p(x_{\circ})\|}$ is dense in $ X  \diagdown K $. It has been shown that if the norm has sufficiently strong differentiability properties, then the distance function $ d_K $ generated by $K$ has similar differentiability properties and it follows that, in some spaces, $K$ is convex.\\

It is well known that in a smooth finite-dimensional normed linear space every Chebyshev set is convex and the metric projection is continuous on $X \diagdown K$ and this fact is used in one of the proofs of this convexity result. So it is natural to impose certain continuity conditions on the metric projection while investigating the convexity of Chebyshev sets in smooth infinite-dimensional spaces. A milestone result is due to Vlasov \cite{Vlas1}:  in a Banach space with rotund dual, Chebyshev sets with continuous metric projection are convex. Later numerous results were proved on the convexity of Chebyshev sets with conditions on the set of points of discontinuity of the metric projection, e.g. see \cite{Balag}. A  close look of Vlasov's proof shows that  the continuity of the metric projection has been used only to establish a differentiability condition of the distance function generated by the set. In terms of a differentiability condition on the distance function, Vlasov's Theorem can be stated as follows.\\
\begin{Proposition} { \rm( Borwein et al. \cite[Theorems 14-18]{Borwn})}.\\
\label{prop_open} In a Banach space $X$ with rotund dual  $X^{\ast},$  a nonempty closed set  $K$ is convex if its distance function $ d_K $ satisfies
$$\limsup_{\left\| y \right\| \rightarrow 0}\dfrac{d_K(x+y)-d_K(x)}{\left\| y \right\|} = 1 ~~{\rm for~ all}~~ x \in X \diagdown K.$$
In particular, this differentiability condition is satisfied if $ d_K $  is smooth
and $\left\| d'_K(x)  \right\| = 1 $ or if $d_K $ is Fr\'echet smooth for all $x \in X \diagdown K.$
\end{Proposition}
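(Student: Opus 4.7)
The plan is to reduce the proposition to Vlasov's structural theorem by showing that the $\limsup = 1$ hypothesis forces $K$ to be a sun with a single ray of closest approach at each external point, and then to invoke the fact that such a set in a Banach space with rotund dual must be convex. I would begin by recalling that $d_K$ is $1$-Lipschitz, so the hypothesis is the \emph{maximal} possible steepness condition: for every $x \notin K$ there exist directions in which $d_K$ increases with the ambient norm at unit rate.

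First, I would fix $x \in X \setminus K$ and translate the $\limsup = 1$ condition into a support-functional statement. Choose $y_n \to 0$ with $(d_K(x+y_n) - d_K(x))/\|y_n\| \to 1$ and apply a Hahn--Banach/separation argument to the epigraph of $d_K$ near $x$ to produce $\phi_x \in X^*$ of norm $1$ that supports $d_K$ in the sense $\phi_x(h) \le d_K(x+h) - d_K(x)$ in some asymptotic or subdifferential sense (for instance as a $w^*$-cluster point of normalized linear functionals realizing the difference quotients). Combined with the Lipschitz bound $|d_K(x+h) - d_K(x)| \le \|h\|$, the norm of $\phi_x$ is forced to be exactly $1$, and rotundity of $X^*$ makes $\phi_x$ the \emph{unique} norm-one functional enjoying this asymptotic support property at $x$.

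Next I would propagate this local information along a ray. The equality $\|\phi_x\| = 1$ together with reflexivity-free Bishop--Phelps/James-type reasoning (or more directly, by running the construction at each point $x + t u$ for $t > 0$ small and a direction $u$ approximately aligned with $\phi_x$) yields a direction $u_x$ with $d_K(x + t u_x) = d_K(x) + t$ for all $t \ge 0$; uniqueness from rotundity ensures the support functional stays the same along this ray. Reversing direction, a standard continuation/accumulation argument produces a point $p_x \in K$ at distance exactly $d_K(x)$ from $x$ lying on the opposite ray, so $x$ has a closest point and the line segment $[p_x, x]$ extends to a sun ray. Since this holds for every $x \notin K$, the set $K$ is a sun (in fact with continuously varying metric projection, inherited from the $\phi_x \mapsto u_x$ correspondence made continuous by rotundity of $X^*$).

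With $K$ realized as a sun whose metric projection is continuous, Vlasov's theorem applies and gives convexity. I expect the main obstacle to be the propagation step: one must upgrade the infinitesimal support information provided by $\limsup = 1$ to a global linear growth of $d_K$ along a ray, and must simultaneously show that the direction of steepest ascent is stable along that ray. Both pieces hinge on the rotundity of $X^*$, which forces the cluster set of normalized difference-quotient functionals to be a singleton; without this uniqueness the ray could bend or bifurcate, and the sun property would fail. The final clause of the proposition is then immediate: Fr\'echet smoothness of $d_K$ with $\|d_K'(x)\| = 1$, or Fr\'echet smoothness alone (which, combined with the $1$-Lipschitz bound, forces $\|d_K'(x)\| = 1$ on $X \setminus K$), implies the $\limsup = 1$ condition directly.
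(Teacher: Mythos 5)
The paper does not actually prove this proposition: it is quoted from Borwein, Fitzpatrick and Giles (Theorems 14--18 of \cite{Borwn}), which repackages Vlasov's theorem, so your attempt must be measured against that source. The established proof is a ``growing balls'' argument: for each $x\in X\diagdown K$ one iteratively produces centers $x_n$ and radii $r_n\to\infty$ with $B(x_{n-1},r_{n-1})\subseteq B(x_n,r_n)$ and $B(x_n,r_n)\cap K=\emptyset$, the $\limsup=1$ hypothesis being exactly what lets the radius grow by almost the full length of each step; rotundity of $X^{\ast}$ then forces the increasing union of such balls to be an open half-space, so $X\diagdown K$ is a union of open half-spaces and $K$ is an intersection of closed half-spaces, hence convex. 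No nearest points and no metric projection enter at any stage.

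Your route has a genuine gap precisely where you manufacture a closest point $p_x$ and realize $K$ as a sun with continuous metric projection. The hypotheses contain no proximinality assumption, and none can be deduced: take $X$ non-reflexive with rotund dual, $f\in X^{\ast}$ of norm one that does not attain its norm, and $K=\ker f$. Then $d_K(x)=|f(x)|$, so $d_K$ is Fr\'echet smooth off $K$ with $\left\|d_K'(x)\right\|=1$ everywhere on $X\diagdown K$, yet $P_K(x)=\emptyset$ for every $x\notin K$. Thus your intermediate claim that every $x\notin K$ has a closest point and that $K$ is a sun is false under the stated hypotheses, and the appeal to the sun/continuous-projection form of Vlasov's theorem cannot be made. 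The same example defeats the propagation step: there need not exist any unit vector $u$ with $d_K(x+tu)=d_K(x)+t$ exactly (that would require $f$ to attain its norm), so the infinitesimal $\limsup$ information cannot be upgraded to an exact ray; the half-space construction is the correct substitute. Finally, your parenthetical claim that Fr\'echet smoothness of $d_K$ alone forces $\left\|d_K'(x)\right\|=1$ off $K$ is true but does not follow from the Lipschitz bound (which only gives $\leqslant 1$); it requires the standard minimizing-sequence argument of Fitzpatrick, which should be cited or written out, and it is exactly the point where Fr\'echet differentiability, as opposed to G\^ateaux, is indispensable.
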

It is easily seen that in any normed linear space $X,$ if every point $x \in X \diagdown K$ is an interior point of an interval with end points $x_{\circ} \in X \diagdown K$ and a nearest point $p(x_{\circ}) \in K,$ then $  p(x_{\circ}) \in P_K(x)$  that is $ p(x) = p(x_{\circ})$ for some $ p(x) \in P_K(x)$ and the  differentiability condition of Proposition \ref{prop_open} will be satisfied. We will use Vlasov's Theorem in the form of Proposition \ref{prop_open} to establish convexity results.  Fitzpatrick \cite{Fitz1} observed a close connection between the continuity of the metric projection and differentiability of the distance function.\\

In section \ref{sec:DPC} of the present paper, differentiability properties of the distance function are discussed and some new results are obtained regarding the property. In the last section, a proximinal condition is assumed and it is shown that the  sufficient condition for the proximinal condition is  almost sun property. Consequently,  almost sun property becomes key to establish the convexity of Chebyshev sets and some related results.\\

To make the paper self-contained, we reproduce some definitions and known results given as follows.

A  function  $h: X \rightarrow \mathbb R$ is said to be  \textit{G\^ateaux differentiable} or \textit{smooth} at $x \in X$ if there exists a continuous linear functional  $h'(x)\in X^{\ast}$, called the G\^ateaux  derivative of $h$, such that for any $\epsilon > 0$ and $y \in X$ with $\left\|y\right\|=1$ there exists  a $\delta(\epsilon, x,y) > 0$ such that

\begin{equation}
\left|\frac{h(x+ty)- h (x)}{t} - h'(x)(y)\right|< \epsilon \rm ~ {when} ~0 < \left|t\right| < \delta.
\label{c}
\end{equation}

A function $h$ is said to be \textit{Fr\'echet smooth} at $x$ if  there exists  a $\delta(\epsilon, x) > 0$ such that the inequality (\ref{c}) holds for all $y \in X$ with $\left\|y\right\|=1.$

 A function $h$ is said to be  \textit{uniformly smooth} on a set $D$  if  there exists  a $\delta(\epsilon, y) > 0$ such that the inequality (\ref{c}) holds for all $x \in D, $ and is said to be \textit{uniformly Fr\'echet smooth} on a set $D$  if  there exists  a $\delta(\epsilon) > 0$ such that the inequality (\ref{c}) holds for all $x \in D $ and for all $y \in X$ with $\left\|y\right\|=1.$

 A space $X$ is said to be \textit{smooth} (\textit{Fr\'echet smooth}) at $x \neq 0$ if the  norm is smooth (Fr\'echet smooth) at $x \neq 0$. We say that $X$ has \textit{uniformly smooth} (\textit{uniformly Fr\'echet smooth})  norm if the norm is uniformly smooth (uniformly Fr\'echet smooth)  on the unit sphere $\left\{x \in X: \left\|x\right\| = 1\right\}.$

Let  $h: X \rightarrow \mathbb R$ be a locally Lipschitz function. The \textit{Clarke generalized directional derivative }of $h$ at a point $x$ and in the direction $y\in X$, denoted by $h^\circ(x; y)$, is given by:
$$h^\circ(x; y)=\limsup_{{z\rightarrow x},~ {t\downarrow 0}} \frac{h (z+ty)- h(z)}{t} $$

and the \textit{Clarke generalized subdifferential} of $h$ at $x$ is given by
$$\partial h(x)= \left\{f \in X^\ast:h^\circ(x; y)\geqslant  f (y),~ \forall y \in X\right\}.$$

A locally Lipschitz function $h: X \rightarrow \mathbb R$ is said to be \emph{strictly smooth} (or \emph{strictly differentiable}) at $x \in X$ if there exists a continuous linear functional  $h'(x)\in X^{\ast}$, called the strict derivative of $h$, such that
\begin{equation}
\lim_{{z\rightarrow x}, {t\downarrow 0}}\frac{h(z+ty)- h (z)}{t} = h'(x)(y)
 \label{str.diff.1}
\end{equation}
for all $y$ in $X$. Of course, here the strict derivative is of G\^ateaux type.\\

The following definition of upper semicontinuity for the set-valued mappings is classical.\\

Let $T$ be a set-valued mapping from a topological space
$Y$ into the dual $X^{\ast}$ of a normed linear space $X$. Then  $T$  is said to be $nw^{\ast}$-\emph{upper semicontinuous} on $Y$ if for each $w^*$-open subset $W$ of $X^{\ast}, \{x\in Y : T (x) \subset W\}$ is
open in $Y$.

\section{Differentiability of the Distance Function}\label{sec:DPC}
Following the notations of \cite{Balag}, we denote by $E(K)$, the set of all points in $X \diagdown K$ which have nearest points in $K$,  the set $T(K)$ is the set of all points in $X \diagdown K$ which has a unique nearest point in $K$ and $AC(K)$ is the set of points  $x \in X \diagdown K$  such that every minimizing sequence  in $K$ for $x$ has a subsequence which converges to a point in $K$ i.e. $AC(K)$ is the set of points of \emph{approximative compactness} of $K$. Recall that $T'(K)$ is  the set of $x \in E(K)$ such that every minimizing sequence  in $K$ for $x$ converges to a unique nearest point of $x$. Clearly, $T'(K)= T(K)\cap AC(K)$.\\
  \hspace*{0.5 cm} We denote by  $f_x$, a subgradient of the norm at $x \in X$, then the subdifferential  $\partial \left\|x\right\|$, the set of all subgradients of the norm at $x \in X$,  is given by
 \begin{eqnarray*}
\partial \left\|x\right\|= \left\{f_x  \in X^{\ast}: f_x (x) = \left\|x\right\|~ {\rm{and}}~ \left\|f_x\right\| = 1\right\}
\end{eqnarray*}
Note that $\partial \left\| \frac{x}{\left\|x\right\|} \right\|= \partial \left\|x\right\|$ for $x \neq 0$.  If the norm is smooth at $x \neq 0 $ then $\partial \left\|x\right\|$  is a singleton. In this case the single subgradient  $f_x$  becomes the G\^ateaux derivative.\\

The following two lemmas play very crucial role in establishing our main results.

\begin{Lemma} {\rm (Borwein and Giles \cite[Lemma 1]{Borw1})} For any $z \in E(K)$ and every $p(z) \in P _{K}(z)$, there exists an $f_{z- p(z)} \in \partial \left\|z- p(z)\right\|$ such that $f_{z- p(z)} \in \partial d_{K}(z).$
\label{lm0}
\end {Lemma}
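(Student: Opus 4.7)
The plan is to derive $f_{\vec{z}}$ from Clarke's calculus, using the convex auxiliary function $\phi(w) := \|w - p(z)\|$, which majorizes $d_K$ everywhere while touching it at $z$.

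First, since $p(z) \in K$, we have $d_K(w) \leq \|w - p(z)\| = \phi(w)$ for every $w \in X$, while $d_K(z) = \|z - p(z)\| = \phi(z)$. Hence the (globally 2-Lipschitz) function $g := \phi - d_K$ is nonnegative on $X$ and vanishes at $z$, so $z$ is a global minimizer of $g$. Clarke's necessary condition for a local minimizer of a locally Lipschitz function then yields $0 \in \partial g(z)$.

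Next, I would apply Clarke's sum rule together with the identity $\partial(-d_K)(z) = -\partial d_K(z)$ (immediate from the definition of the generalized directional derivative $(-F)^\circ(z;y) = F^\circ(z;-y)$) to obtain
$$0 \in \partial g(z) \subseteq \partial \phi(z) + \partial(-d_K)(z) = \partial \phi(z) - \partial d_K(z).$$
Thus there exist $f \in \partial \phi(z)$ and $h \in \partial d_K(z)$ with $f = h$. Since $\phi$ is convex, its Clarke subdifferential coincides with the convex subdifferential, and a short computation shows that the latter is precisely the set $\partial \|z - p(z)\|$ as defined in the paper (the norming functionals of the nonzero vector $z - p(z)$; note that $z \neq p(z)$ since $z \in X \setminus K$). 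The common element $f_{\vec{z}} := f = h$ then lies in both $\partial \|z - p(z)\|$ and $\partial d_K(z)$, which is the desired conclusion.

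There is no genuine obstacle here: the only points that deserve explicit verification are the one-sided nature of the Clarke sum rule (which fortunately goes in the direction we need) and the standard identification of the Clarke/convex subdifferential of the convex function $\phi$ at $z$ with the paper's set $\partial \|z - p(z)\|$ of norming functionals.
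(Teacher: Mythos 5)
Your argument is correct: the majorization $d_K \leq \phi := \|\cdot - p(z)\|$ with equality at $z$ makes $z$ a global minimizer of the Lipschitz function $\phi - d_K$, and Clarke's necessary condition plus the sum rule and $\partial(-d_K)(z) = -\partial d_K(z)$ force a common element of $\partial\phi(z)$ and $\partial d_K(z)$; the identification of $\partial\phi(z)$ with the norming functionals of $z - p(z)$ (legitimate since $z \notin K$ and $K$ is closed, so $z \neq p(z)$) finishes it. Note that the paper itself supplies no proof of this lemma --- it is quoted verbatim from Borwein and Giles --- but your penalization-plus-sum-rule argument is precisely the standard proof of that cited result, so there is nothing to object to.
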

\begin{Lemma} {\rm (Dutta \cite[Lemma 1]{Dutt1})}
 For any $z \in T'(K)$ we have $$\partial d_{K}(z) \subseteq  \partial \left\|z- P_{K}(z)\right\|.$$
\label{lm1}
The equality holds if the norm on $X$ is  smooth at $z- P_{K}(z), z \notin K$. Moreover, for $z \notin K$,  if the norm on $X$ is Fr\'echet smooth at $z- P_{K}(z)$, then $d_{K}$ is Fr\'echet smooth at $z.$
\end {Lemma}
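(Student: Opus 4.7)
The plan is to first establish the inclusion $\partial d_K(z) \subseteq \partial\|z-p\|$ (with $p = P_K(z)$, a single point by $z \in E'(K)$) and then derive the smoothness consequences. Writing $\vec{z} = (z-p)/\|z-p\|$ and taking $f \in \partial d_K(z)$, the $1$-Lipschitz property of $d_K$ already gives $\|f\| \leq 1$, since $f(y) \leq d_K^\circ(z;y) \leq \|y\|$. The inclusion therefore reduces to showing $f(\vec{z}) = 1$, which I extract by sandwiching via the two Clarke derivatives $d_K^\circ(z;\pm\vec{z})$.

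The equality $d_K^\circ(z;\vec{z}) = 1$ is straightforward: the Lipschitz bound forces $\leq 1$, while the witness $w = z - t\vec{z}$ in the $\limsup$, together with $d_K(z - t\vec{z}) = \|z-p\| - t$ (upper via $p$, lower by Lipschitz), yields the matching lower bound. The core is the companion estimate $d_K^\circ(z;-\vec{z}) \leq -1$. Given arbitrary $w_n \to z$ and $t_n \downarrow 0$, choose $p_n \in K$ with $\|w_n - p_n\| \leq d_K(w_n) + t_n^2$. Then $\|z - p_n\| \to d_K(z)$, so $(p_n)$ is a minimizing sequence for $z$, and $z \in E'(K)$ forces $p_n \to p$; hence the unit vectors $\vec{z}_n := (w_n - p_n)/\|w_n - p_n\|$ converge to $\vec{z}$. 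Testing $d_K(w_n - t_n\vec{z}) \leq \|w_n - t_n\vec{z} - p_n\|$ and using the decomposition $(w_n - p_n) - t_n\vec{z} = (\|w_n - p_n\| - t_n)\vec{z}_n + t_n(\vec{z}_n - \vec{z})$ gives $\|w_n - t_n\vec{z} - p_n\| \leq \|w_n - p_n\| - t_n + t_n\|\vec{z}_n - \vec{z}\|$, so $(d_K(w_n - t_n\vec{z}) - d_K(w_n))/t_n \leq t_n + \|\vec{z}_n - \vec{z}\| - 1 \to -1$. Taking the $\limsup$ over all admissible sequences yields $d_K^\circ(z;-\vec{z}) \leq -1$; the reverse inequality $\geq -1$ is routine from subadditivity of $d_K^\circ(z;\cdot)$ combined with $d_K^\circ(z;\vec{z}) = 1$. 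Then $f(-\vec{z}) \leq d_K^\circ(z;-\vec{z}) = -1$ combined with $\|f\| \leq 1$ forces $f(\vec{z}) = 1$, completing the inclusion.

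For the equality when the norm is (G\^ateaux) smooth at $z-p$: smoothness makes $\partial\|z-p\| = \{f\}$ a singleton, and Lemma \ref{lm0} inserts this same $f$ into $\partial d_K(z)$, so the inclusion already proved forces $\partial d_K(z) = \{f\}$. For Fr\'echet smoothness of $d_K$ at $z$ under Fr\'echet smoothness of the norm at $z-p$, the upper bound $d_K(z+y) - d_K(z) - f(y) \leq \|z+y-p\| - \|z-p\| - f(y) = o(\|y\|)$ is immediate from the Fr\'echet expansion of the norm at $z-p$. For the matching lower bound, I would take $q_y \in K$ with $\|z+y-q_y\| \leq d_K(z+y) + \|y\|^2$; the same $E'(K)$ argument yields $q_y \to p$ as $y \to 0$. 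Expanding $\|z+y-q_y\|$ around $z-p$ via the Fr\'echet formula on the small increment $y + p - q_y$, and exploiting the Smulyan-type characterization of Fr\'echet smoothness (operator-norm convergence $g_y \to f$ for any selection $g_y \in \partial\|z+y-q_y\|$, which forces $\|z-p\| - g_y(z-p) = O(\|y\| + \|p-q_y\|)$) then controls the residual terms and delivers $d_K(z+y) - d_K(z) - f(y) \geq -o(\|y\|)$.

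The main obstacle is the estimate $d_K^\circ(z;-\vec{z}) \leq -1$: the $\limsup$ in Clarke's definition allows $\|w - z\|/t$ to be arbitrarily large, so the natural fixed witness $p$ produces an error of order $\|w - z\|/t$ that need not vanish. The remedy is the $(w_n, t_n)$-dependent witness $p_n$ with slack $o(t_n)$, whose convergence $p_n \to p$ is forced by $E'(K)$; this is what transforms the naive pointwise estimate into one that is uniform over all admissible sequences. The Fr\'echet step carries a secondary technical difficulty, namely that the crude bound $|f(p - q_y)| \leq \|p-q_y\|$ is only $o(1)$ rather than $o(\|y\|)$; replacing $f$ by the norm-attaining $g_y$ and invoking Smulyan is the key device for absorbing these residuals into $o(\|y\|)$.
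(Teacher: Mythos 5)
This statement is quoted verbatim from Dutta \cite{Dutt1} (his Lemma 1) and the paper offers no proof of it, so there is no in-paper argument to compare against; judged on its own, your reconstruction is essentially correct and follows the same circle of ideas as the original source. The two genuinely load-bearing steps are both present and right: the estimate $d_{K}^{\circ}(z;-\vec{z})\leqslant -1$ via a near-minimizer $p_n$ with slack $o(t_n)$, whose convergence to $p$ is exactly what the hypothesis $z\in E'(K)$ supplies and what makes the bound uniform over all admissible sequences in the Clarke $\limsup$; and, for the Fr\'echet part, the passage from the $o(1)$ residual $\left\|p-q_y\right\|$ to an $o(\left\|y\right\|)$ error via the \v{S}mulyan characterization. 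One small repair in the last step: you should take the selection $g_y\in\partial\left\|z-q_y\right\|$ (not $\partial\left\|z+y-q_y\right\|$) and use $d_K(z)=\left\|z-p\right\|\leqslant\left\|z-q_y\right\|$ together with $\left\|z+y-q_y\right\|\geqslant g_y(z+y-q_y)=\left\|z-q_y\right\|+g_y(y)$; this yields $d_K(z+y)-d_K(z)\geqslant g_y(y)-\left\|y\right\|^2\geqslant f(y)-\left\|g_y-f\right\|\left\|y\right\|-\left\|y\right\|^2$ directly and avoids ever expanding around $z-p$, which is where the uncontrolled $\left\|p-q_y\right\|$ term would otherwise enter. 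With the subgradient anchored at $z+y-q_y$ the convexity inequality points the wrong way and only gives an upper bound on the increment. The equality claim under G\^ateaux smoothness via Lemma \ref{lm0} is exactly right.
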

For a more detailed explanation of the generalized subdifferential, see Clarke \cite{Clarke1}. We refer \cite{Dudov} for subdifferentiability and superdifferentiability (in the Dem'yanov-Rubinov sense) of the distance function. \\
\begin{Remark}
\label{rmk1}
Since $d_{K} $ is a Lipschitz function, the condition that $\partial d_{K}(x)$
 is singleton is equivalent to strict differentiability of $d_{K} $ at $x$, {\rm see \cite[Proposition 2.2.4]{Clarke1}}.


In  Lemma \ref{lm1}, if the norm is smooth $($Fr\'echet smooth$)$, then clearly, $d_{K} $ is strictly smooth (and Fr\'echet smooth)on $ T'(K)$  and the strict derivative coincides with the G\^ateaux derivative $($the Fr\'echet derivative$)$.
\end{Remark}


Now, we establish the following result under the following proximity condition on $K$:  $T'(K)$ is dense in $X \diagdown K$. That would serve as a  part of the next result of this paper.
\begin{Proposition}
\label{Prop-01}
Let $X$ be a smooth normed linear space. Let $K$  be a nonempty closed set with the set $T'(K)$ dense in $X \diagdown K$. Suppose $x \in X \diagdown K$ is such that for every $y_n \in T'(K)$ with $y_n \rightarrow x$ the sequence  $\left\{f_{y_n-p(y_n)}\right\}$ is  $w^*$-convergent. Then the distance function $d_K$ generated by $K$ is strictly smooth  at $x$.
\end {Proposition}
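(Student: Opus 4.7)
The plan is to exhibit a single functional $g \in X^*$ and show $\partial d_K(x)=\{g\}$; by the Remark this is equivalent to strict smoothness of $d_K$ at $x$. First I would pin down $g$: given two sequences $y_n, z_n \in E'(K)$ converging to $x$ with $w^*$-limits $g^{(1)}$ and $g^{(2)}$ respectively, the interleaved sequence $y_1, z_1, y_2, z_2, \ldots$ still lies in $E'(K)$ and converges to $x$, so by the hypothesis its image under $\vec{\cdot} \mapsto g_{\vec{\cdot}}$ is $w^*$-convergent; since its two subsequences $\{g_{\vec{y}_n}\}$ and $\{g_{\vec{z}_n}\}$ share that limit, we obtain $g^{(1)} = g^{(2)} =: g$.

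For each $y \in E'(K)$, smoothness of $X$ forces $\partial \|y-p(y)\|=\{g_{\vec{y}}\}$ to be a singleton; combining Lemma \ref{lm0} (which produces $g_{\vec{y}} \in \partial d_K(y)$) with Lemma \ref{lm1} (the inclusion $\partial d_K(y) \subseteq \partial \|y-p(y)\|$) gives $\partial d_K(y)=\{g_{\vec{y}}\}$, so by the Remark $d_K$ is strictly smooth at every point of $E'(K)$. The standard $w^*$-upper semicontinuity of the Clarke subdifferential then applies to any sequence $y_n \in E'(K)$ with $y_n \to x$: the $w^*$-limit $g$ of $g_{\vec{y}_n} \in \partial d_K(y_n)$ must lie in $\partial d_K(x)$, establishing one inclusion.

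The main obstacle is the reverse inclusion $\partial d_K(x)\subseteq \{g\}$. Given $f \in \partial d_K(x)$ and $v \in X$, the inequality $f(v) \leq d_K^\circ(x; v)$ together with the $\limsup$ definition of $d_K^\circ(x; v)$ yields sequences $z_n \to x$, $t_n \downarrow 0$ with $\frac{d_K(z_n+t_n v)-d_K(z_n)}{t_n} \to d_K^\circ(x; v)$. Lebourg's mean value theorem applied to each segment $[z_n, z_n+t_n v]$ supplies $c_n \in (z_n, z_n+t_n v)$ (so $c_n \to x$) and $\xi_n \in \partial d_K(c_n)$ whose value on $v$ is exactly the difference quotient. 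Density of $E'(K)$ lets me pick $c_n' \in E'(K)$ close to $c_n$; then $c_n' \to x$ and by the common-limit property $g_{\vec{c}_n'}(v) \to g(v)$. The technical heart is to force $\xi_n(v) - g_{\vec{c}_n'}(v) \to 0$: since $\partial d_K(c_n')=\{g_{\vec{c}_n'}\}$ is single-valued, $w^*$-upper semicontinuity at $c_n'$ confines $\xi_n$ to any prescribed $w^*$-neighborhood of $g_{\vec{c}_n'}$ once $\|c_n - c_n'\|$ falls below a radius $\rho_n$ that depends on $c_n'$; I plan to handle this by an iterative/diagonal choice, refining $c_n' \in E'(K)$ using density so that $\|c_n - c_n'\| < \rho_n$ for the neighborhood $\{\eta : |\eta(v) - g_{\vec{c}_n'}(v)| < 1/n\}$.

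Granted this step, $\xi_n(v) \to g(v)$, whence $d_K^\circ(x; v) = g(v)$; the same reasoning with $-v$ yields $d_K^\circ(x;-v) = -g(v)$, and combining the inequalities $f(v) \leq g(v)$ and $f(-v) = -f(v) \leq -g(v)$ forces $f(v) = g(v)$ for every $v$, so $f = g$ and $\partial d_K(x) = \{g\}$. The Remark then delivers the strict smoothness of $d_K$ at $x$.
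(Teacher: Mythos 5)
Your identification of the candidate limit $g$ by interleaving sequences, and the inclusion $g \in \partial d_K(x)$ via Lemmas \ref{lm0} and \ref{lm1} together with the $w^*$-upper semicontinuity of the Clarke subdifferential, match the first half of the paper's argument and are sound. The difficulty, as you correctly locate, is the reverse inclusion, and there your route through Lebourg's mean value theorem has a genuine gap. The mean value theorem hands you $\xi_n \in \partial d_K(c_n)$ at intermediate points $c_n$ which need not lie in $E'(K)$, so you know nothing about $\partial d_K(c_n)$ directly; to compare $\xi_n$ with $g_{\vec{c}_n'}$ you invoke $w^*$-upper semicontinuity \emph{at} $c_n'$, whose radius $\rho_n$ depends on $c_n'$ --- but $c_n'$ must itself be chosen after $c_n$ so as to satisfy $\left\|c_n - c_n'\right\| < \rho_n$. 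Nothing guarantees such a $c_n'$ exists: the map $c' \mapsto \rho(c')$ can degenerate faster than $\left\|c_n - c'\right\|$ as $c' \rightarrow c_n$ (there is no uniform modulus of upper semicontinuity available), and the proposed iterative refinement does not terminate, since each new choice of $c_n'$ resets the target radius. The natural fallback of passing to a $w^*$-cluster point of $\left\{\xi_n\right\}$ only recovers the standard identity $d_K^\circ(x;v) = \max\left\{\eta(v): \eta \in \partial d_K(x)\right\}$, which is circular in this context.

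The paper avoids the intermediate points altogether: from the definition of $d_K^\circ(x;y)$ it extracts $z_n \rightarrow x$ and $t_n \downarrow 0$ realizing the limsup, then uses the density of $E'(K)$ and the fact that $d_K$ is $1$-Lipschitz to replace the difference quotient $\bigl(d_K(z_n + t_n y) - d_K(z_n)\bigr)/t_n$ by $\bigl(d_K(y_n) - d_K(y_n - t_n y)\bigr)/t_n$ with $y_n \in E'(K)$ chosen so that $\left\|z_n + t_n y - y_n\right\| < t_n^2$, at the cost of an $O(t_n)$ error; this quotient is then dominated by $d_K^\circ(y_n;y) = g_{\vec{y}_n}(y) \rightarrow g(y)$. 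If you wish to keep your overall structure, the repair is to perform this Lipschitz transfer of the difference quotient onto points of $E'(K)$, where the subdifferential is known to be the singleton $\left\{g_{\vec{y}_n}\right\}$, rather than to chase a modulus of upper semicontinuity at the Lebourg points.
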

\begin{proof}
To prove that $d_K$ is strictly smooth at $x \in X \diagdown K$, given  Remark \ref{rmk1}, it suffices to show that $\partial d_{K}(x)$ is a singleton.\\
 Let $y_n \in T'(K)$ be any sequence such that $y_n \rightarrow x$. By definition of the upper limit, for each $n \in \mathbb N$ there exists $ z_n \in X \diagdown K$ and $t_n > 0$ such that
 $$\left\| z_n - y_n\right\|+t_n < \frac{1}{n}, {\rm and}$$
 \begin{eqnarray*}
 d_{K}^\circ(y_n; y)-\frac{1}{n} & \leqslant &   \dfrac{d_{K} (z_n+t_n y)- d_{K}(z_n)}{ t_n }  \\
 {\rm hence,~~}  \limsup_{n\rightarrow \infty} d_{K}^\circ(y_n; y) & \leqslant & \limsup_{{z\rightarrow x},~ {t\downarrow 0}} \frac{ d_{K}(z+ty)- d_{K}(z)}{t}\\
  & = & d_{K}^\circ(x; y).
 \end{eqnarray*}
 Since $y_n \in T'(K)$ with  $y_n \rightarrow x$, so by Lemma \ref{lm1},  $\partial d_{K}(y_n)= \partial \left\| y_n - P_{K} (y_n)\right\| $ is a singleton,  we have $ d_{K}^\circ(y_n; y)= d'_{K} (y_n)(y)= f_n (y),$ where $f_n (y_n - P_{K} (y_n))= \left\| y_n - P_{K} (y_n)\right\|$, that is $f_n = f_{y_n-p(y_n)}$, with $y_n \rightarrow x$ and $y_n \in T'(K)$, hence $w^{\ast}$-convergent. Let $f_{y_n-p(y_n)} \rightarrow f$ in $w^{\ast}$-topology, by $w^{\ast}$-upper semicontinuity of $\partial d_{K}$ (see, e.g., \cite[Proposition 7.3.8(c)]{Sch}), we must have  $f \in \partial d_{K}(x).$\\
 Thus, for all $y \in X$ with $\|y\|=1$ and for every sequence $y_n \in T'(K)$ with  $y_n \rightarrow x$, we have $\lim_{n\rightarrow \infty} d_{K}^\circ(y_n;y)$ exists in $\partial d_{K}(x)(y),$ so linear in $y$ and
\begin{eqnarray*}
\lim_{n\rightarrow \infty} d_{K}^\circ(y_n; y) & \leqslant &   d_{K}^\circ(x; y).
\end{eqnarray*}

Now, we prove that the reverse of the last inequality holds for some $y_n \in T'(K)$ with  $y_n \rightarrow x$. Which proves that $d_{K}^\circ(x; y)$ is linear in $y$, and it follows that $\partial d_{K}(x)$ is a singleton.\\

For, let  $y \in X$ with $\|y\|=1$, then by definition of  $d_{K}^\circ(x; y)$, given any $ \epsilon > 0 $  there exist $z_n \in X \diagdown K$ with $z_n \rightarrow x$  and $t_n \downarrow 0$ such that for each $n,$ we have
 \begin{eqnarray*}
 d_{K}^\circ(x; y)-\frac{\epsilon}{2} & \leqslant &   \dfrac{d_{K} (z_n+t_n y)- d_{K}(z_n)}{ t_n }.
  \end{eqnarray*}
  Since $T'(K)$ is dense in $X \setminus K$, choose $y_n \in T'(K)$ such that $\left\| z_n +t_ny -y_n\right\| < t_n^2.$ Then $d_{K} (z_n+t_n y) \leqslant d_{K} (y_n) + t_n^2$ and $d_{K} (z_n) \geqslant d_{K} (y_n-t_ny) - t_n^2.$ Thus for all sufficiently large $n$, we have
\begin{eqnarray*}
 d_{K}^\circ(x; y)-\frac{\epsilon}{2} & \leqslant &   \dfrac{d_{K} (y_n)- d_{K}(y_n-t_ny)}{ t_n } + 2t_n  \\
  & \leqslant & d_{K}^\circ(y_n; y)+ \frac{\epsilon}{2}+2t_n.
 \end{eqnarray*}
 We justify the last inequality. Given $ \epsilon > 0 $  there exists $ \delta >  0$ such that for each $n,$ we have
 \begin{eqnarray*}
 d_{K}^\circ(y_n; y)+\frac{\epsilon}{2} & \geqslant &  \sup _{0< t< \delta}\dfrac{d_{K} ((y_n-t y)+t y)- d_{K}(y_n-t y)}{ t}    \\
  & = &  \sup _{0< t< \delta} \dfrac{d_{K} (y_n)- d_{K}(y_n-ty)}{ t }.
 \end{eqnarray*}
 Since $t_n \downarrow 0,$ so for all sufficiently large $n, t_n < \delta$. Thus for all sufficiently large $n,$
\begin{eqnarray*}
 d_{K}^\circ(y_n; y)+\frac{\epsilon}{2} & \geqslant &  \dfrac{d_{K} (y_n)- d_{K}(y_n-t_ny)}{ t_n }.
 \end{eqnarray*}
 Hence, the inequality is justified.  Finally, we have
 \begin{eqnarray*}
d_{K}^\circ(x; y)-\epsilon & \leqslant & \lim_{n\rightarrow \infty} d_{K}^\circ(y_n; y),
\end{eqnarray*}
and the required inequality follows. This completes the proof of the proposition.
\end{proof}
\begin{Remark}
It should be noticed that the merely density condition is not sufficient for the conclusion of Proposition \ref {Prop-01}. Let us consider $K= \left\{ x \in \mathbb R^2| \left\|x\right\| =1\right\}$, the unit sphere in smooth space $X=\mathbb R^2$, then $T'(K)= X \diagdown (K \cup \left\{ 0 \right\})$. Put $x=0,$ then we can find sequences $\left\{ y_n\right\}$ of nonzero vectors such that the sequences converge to zero but $\left\{f_{y_n-p(y_n)}\right\}$ are not convergent, so $d_K$ is not strictly smooth at $x=0$. The subdifferential of $d_K$ is the closed unit ball and it is easy to see that $d_K$ is  not smooth at $x=0$.
\end {Remark}

Before proceeding further to establish our main results, note that if $X$ is smooth then for any  $z_n \in E(K)$ and every $ p(z_n) \in P_K(z_n)$, the subdifferential   $\partial \left\|z_n- p(z_n)\right\| $ is a singleton which depends on $z_n$ and $ p(z_n)$ both. Since $P_K(z_n)$ is set-valued, so $f_{z_n- p(z_n)} \in \partial \left\|z_n- p(z_n)\right\| $ need not be  unique corresponding to given $z_n \in E(K).$ Indeed, every sequence $\left\{z_n\right\}$ in $E(K)$ determines (possibly uncountably)  many sequences $ \left\{f_{z_n- p(z_n)}\right\}$. Hence,  when we say that $ \left\{f_{z_n- p(z_n)}\right\}$ is ($w^{\ast}$-or norm) convergent for every sequence $\left\{z_n\right\}$ in $E(K)$ with $ z_n\rightarrow x $, we mean that  for every $ p(z_n) \in P_K(z_n)$ the sequence $ \left\{f_{z_n- p(z_n)}\right\}$ where $ f_{z_n- p(z_n)} \in \partial \left\|z_n- p(z_n)\right\| $, obtained in this way, is ($w^{\ast}$-or norm) convergent. It may be noted that they need not converge to the same ($w^{\ast}$-or norm) limit. For more details, see  Borwein et al. \cite[Corollary 9]{Borwn}  and Giles \cite{Giles1}.\\

The following proposition provides a necessary and sufficient condition for G\^ateaux differentiability, and a sufficient condition for Fr\'echet differentiability of the distance function of $K$ if $T'(K)$ is dense in $X \diagdown K.$ We do not assume the uniform differentiability conditions as in Giles \cite[Proposition 2]{Giles1}. Though, in \cite{Giles2} the author has given a characterization for the smoothness (Fr\'echet smoothness) of distance function without assuming the density of $E(K)$ but under the uniform differentiability constraints. Hence, our results strengthen that of Giles \cite{Giles1, Giles2}.

\begin{Proposition}
\label{Prop-1} Let $X$ be a normed linear space, and  $K$  be a nonempty closed set with the set $T'(K)$ dense in $X \diagdown K$. Then the following hold.\\\\
\hspace*{0.5cm} {\rm(i)} If the norm is smooth then the distance function $d_K$ generated by $K$ is strictly smooth at $x \in X \diagdown K$ if and only $\left\{f_{z_n- p(z_n)}\right\}$is  $w^*$-convergent for every sequence $\left\{z_n\right\}$ in $E(K)$ with $ z_n \rightarrow x $ and a nearest point $ p(z_n) \in P_K(z_n)$.\\\\
 \hspace*{0.5cm} {\rm(ii)} If the norm is Fr\'echet smooth then the distance function $d_K$ generated by $K$ is strictly smooth and Fr\'echet smooth at $x \in X \diagdown K$ if  $\left\{f_{z_n- p(z_n)}\right\}$is  norm convergent for every sequence $\left\{z_n\right\}$ in $E(K)$ with $ z_n \rightarrow x $ and a nearest point $ p(z_n) \in P_K(z_n)$.
\end{Proposition}

\begin{proof}
  \textbf{(i)} we consider the case when the norm is smooth. Suppose that for every sequence  $\left\{z_n\right\}$ in $E(K)$ with $ z_n \rightarrow x $, the sequence $\left\{f_{z_n- p(z_n)}\right\}$ is $w^{\ast}$-convergent. Since $E(K)$ contains $T'(K)$, it follows from Proposition \ref{Prop-01} that $d_K$ is strictly smooth at $x.$


Conversely, suppose that $d_{K}$ is strictly smooth at $x$. Let $f_{z_n- p(z_n)} \in \partial \left\| z_n - p(z_n)\right\|$. Since the norm is smooth, by Lemma \ref{lm0}, it follows that $f_{z_n- p(z_n)} \in \partial d_{K}(z_n)$. Let $f$ be the $w^{\ast}$-cluster point of $f_{z_n- p(z_n)}$, by $w^{\ast}$-upper semicontinuity of $\partial d_{K}$, we have $f \in \partial d_{K}(x)$, but $d_{K}$ is strictly smooth at $x$, hence the sequence $\left\{f_{z_n - p(z_n)}\right\}$ is $w^{\ast}$-convergent to $d'_{K}(x).$\\

 \textbf{ (ii)} We \textbf{now} consider the case when the norm is Fr\'echet smooth.
Suppose that for every sequence $\left\{z_n\right\}$ in $E(K)$ with $ z_n \rightarrow x $, the sequence $f_{z_n - p(z_n)}$ is  norm  convergent (so $w^{\ast}$-convergent to $d'_{K}(x)$). Then $d_{K}$ is strictly smooth at $x$.  It remains to prove the Fr\'echet smoothness only.\\

 Since $d_K$ is smooth at $x$, for any $t_n \rightarrow 0$ and any $y \in X$ with $ \|y\| = 1,$ we have
\begin{eqnarray*}
   \lim_{n \rightarrow \infty} \dfrac{d_{K} (x+t_n y)- d_{K}(x)}{ t_n } & = &  d'_K(x) (y)=  \lim_{n \rightarrow \infty} f_{z_n - p(z_n)}(y).
\end{eqnarray*}
But $f_{z_n - p(z_n)}\rightarrow d'_K(x)$ in  norm, therefore the last limit is  uniform  over  $ \|y\| = 1.$ Consequently, the left hand side limit is uniform over  $ \|y\| = 1$ and same for any sequence $t_n \rightarrow 0$. Therefore,  the limit
\begin{eqnarray*}
   \lim_{t \rightarrow 0} \dfrac{d_{K} (x+ty)- d_{K}(x)}{t}  =  d'_K(x)
(y)
\end{eqnarray*}
exists uniformly over  $\|y\| = 1$, \rm see~ Rudin \cite[Theorem 4.2]{Rud}. Hence $d_{K}$ is Fr\'echet smooth at $x$.
\end{proof}
\begin{Remark}
\label{rmk2}
It is known that for a nonempty closed set $K$  in a  Fr\'echet smooth  space $X$ the distance function $d_{K}$ is Fr\'echet smooth on $T'(K)$, see, e.g., \cite[Theorem (C)]{Bala88}. However, in Proposition \ref{Prop-1} (ii), we have shown both the Fr\'echet smoothness and strict smoothness of $d_{K}$ at any point $x \in X \diagdown K$ but on some expense of restrictions.
\end {Remark}
\section{Almost Sun and a Proximinality}\label{sec:MResl}
 We show that the almost sun property is a key ingredient in a number of results in this section. We recall that the norm on $X$ is said to be locally uniformly convex (LUR) if for every $x \in X$ with $||x|| =1 $ and given any sequence $\left\{y_n\right\}$ in $X$  with $||y_n|| \leqslant 1 $ such that $\displaystyle \|\frac{y_n +x }{2}\|\rightarrow 1 \Rightarrow y_n \rightarrow x.$
\begin{Remark}
\label{rem_L}
In  {\rm \cite [Theorem 4]{Dutt1}}, it has been  shown for an almost proximinal set $K$ that the local uniform convexity {\rm(LUR)} of the norm  is a sufficient condition on $X$ for the set $T'(K)$ to be dense in $X \diagdown K$.
\end {Remark}

The following Theorem signifies that the uniform smoothness (uniform Fr\'echet smoothness) of the distance function on a dense set will result in the smoothness (Fr\'echet smoothness) on $X \diagdown K$, if the norm on $X$ is LUR and smooth (Fr\'echet smooth).

\begin{Theorem}
\label{thm01} Let $X$ be a normed linear space with LUR and  smooth {\rm(}Fr\'echet smooth{\rm )} norm.
Let $K$ be a nonempty closed and almost proximinal set, let some subset $D(K)$ of  $E(K)$ be dense in $X \diagdown K$, and let $d_{K}$ be uniformly
smooth {\rm(}uniformly Fr\'echet smooth{\rm )} on $D(K)$.
Then the distance function $d_{K}$ is strictly smooth {\rm(}and Fr\'echet smooth{\rm)}  on $X\diagdown K.$
\end{Theorem}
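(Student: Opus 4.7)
The plan is to produce a candidate derivative $g\in X^{\ast}$ at each $x\in X\diagdown K$ as the $w^{\ast}$-limit of $d'_K$ along approximating sequences in the dense set $E_r(K)$, and then identify $g$ with the Clarke generalized directional derivative $d_K^{\circ}(x;\cdot)$ to force $\partial d_K(x)=\{g\}$; by the Remark following Lemma~\ref{lm1}, this is equivalent to strict smoothness of $d_K$ at $x$. The preliminary observation is that LUR of the norm makes every minimizing sequence converge, so $E(K)=E'(K)$; combining Lemmas~\ref{lm0} and~\ref{lm1} with smoothness of $X$ yields $\partial d_K(y)=\{f_{\vec{y}}\}$ for every $y\in E_r(K)\subseteq E(K)$, so $d_K$ is already strictly smooth on $E_r(K)$ with $d'_K(y)=f_{\vec{y}}$.

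First I would turn the uniform smoothness of $d_K$ on $E_r(K)$ into a uniform modulus of continuity for $d'_K$: inserting difference quotients at $t=\delta(\epsilon,v)/2$ yields, for all $y,y'\in E_r(K)$ and $v\in X$ with $\|v\|=1$,
\[
|d'_K(y)(v)-d'_K(y')(v)|\le 2\epsilon+\frac{4\|y-y'\|}{\delta(\epsilon,v)}.
\]
Hence for any $\{y_n\}\subseteq E_r(K)$ with $y_n\to x$ the functionals $\{d'_K(y_n)\}$ are pointwise Cauchy on the unit sphere; being norm-bounded by $1$, by Banach--Steinhaus they converge $w^{\ast}$ to a single $g\in X^{\ast}$, and interleaving two approximating sequences shows that $g=g(x)$ depends only on $x$.

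The heart of the argument is the identification $d_K^{\circ}(x;v)=g(v)$ for every $v$. Given arbitrary $z_k\to x$ and $t_k\downarrow 0$, I would choose $y_k\in E_r(K)$ with $\|y_k-z_k\|\le t_k^{2}$ by density, so $y_k\to x$ as well. The $1$-Lipschitz property of $d_K$ gives
\[
\frac{d_K(z_k+t_k v)-d_K(z_k)}{t_k}\le \frac{d_K(y_k+t_k v)-d_K(y_k)}{t_k}+2t_k,
\]
and uniform smoothness on $E_r(K)$ bounds the right-hand quotient by $d'_K(y_k)(v)+\epsilon$ once $t_k<\delta(\epsilon,v)$. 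Since $d'_K(y_k)(v)\to g(v)$, this yields $d_K^{\circ}(x;v)\le g(v)$; the same estimate applied to $-v$ together with the sublinearity $d_K^{\circ}(x;v)+d_K^{\circ}(x;-v)\ge 0$ forces $d_K^{\circ}(x;v)=g(v)$. Linearity of $d_K^{\circ}(x;\cdot)$ then gives $\partial d_K(x)=\{g\}$, and hence strict smoothness of $d_K$ at $x$.

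For the Fréchet version the same argument runs with $\delta(\epsilon)$ independent of $v$: the modulus estimate upgrades to the norm estimate $\|d'_K(y)-d'_K(y')\|\le 2\epsilon+4\|y-y'\|/\delta(\epsilon)$, giving norm convergence $d'_K(y_n)\to g$, and a three-term decomposition of $|\tfrac{d_K(x+tv)-d_K(x)}{t}-g(v)|$ controlled uniformly in $\|v\|=1$ delivers Fréchet smoothness at $x$. The main delicacy I expect is the coupling in the third paragraph: $y_k$ must be chosen close enough to $z_k$ (scale $t_k^{2}$) that the Lipschitz error $2t_k$ vanishes, while $t_k$ must still sit below the base-point-uniform threshold $\delta(\epsilon,v)$; the density of $E_r(K)$ combined with the uniformity built into the hypothesis is exactly what enables this balance.
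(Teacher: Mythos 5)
Your proposal is correct and follows essentially the paper's route. The first half — turning uniform smoothness of $d_K$ on $E_r(K)$ into the modulus estimate $|d'_K(y)(v)-d'_K(y')(v)|\leqslant 2\epsilon+4\|y-y'\|/\delta$ via the three-term decomposition of difference quotients, and hence extracting a single limit functional $g$ along sequences from the dense set — is exactly the paper's argument. Where the paper then simply invokes Proposition \ref{Prop-1} (whose sufficiency half rests on Proposition \ref{Prop-01}), you inline that step: your choice of $y_k\in E_r(K)$ with $\|y_k-z_k\|\leqslant t_k^{2}$ and the Lipschitz transfer of quotients is the same device used in the proof of Proposition \ref{Prop-01}, and you close the identification $d_K^{\circ}(x;\cdot)=g$ with sublinearity of the Clarke directional derivative rather than with $w^{\ast}$-upper semicontinuity of $\partial d_K$; both are valid, and your version has the mild advantage of not needing the hypothesis of Proposition \ref{Prop-1} to be verified for \emph{every} sequence in $E(K)$ rather than only for sequences in the dense set actually at hand. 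One caveat: your preliminary claim that LUR forces $E(K)=E'(K)$ is false — the paper's own Remark~2.2 (the unit circle in $\mathbb{R}^{2}$ with $x=0$) is a counterexample in a LUR space. What is true, and all you need, is that $E_r(K)\subseteq E'(K)$ (interior points of segments $[x_{\circ},p(x_{\circ})]$ lie in $E'(K)$ when the norm is LUR), or even just the G\^ateaux differentiability of $d_K$ at each point of $E_r(K)$, which is already built into the uniform-smoothness hypothesis; so the slip does not damage the proof.
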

\begin{proof} Let  $\bar{x} \in X \diagdown K$ and $\bar{r} > 0$ be arbitrary chosen.  Then due to the denseness of $D(K)$ in $X \diagdown K$, the set $D(K) \cap B(\bar{x}, \bar{r}) $ is nonempty. Since $d_{K}$ is uniformly smooth  (uniformly Fr\'echet smooth)  on $D(K) \cap B(\bar{x}, \bar{r})$. Therefore, for given $\epsilon > 0 $ and $y \in X$
 with $ \left\|y\right\|  =1$, there exists a $\delta (\epsilon, y)>0 \left(\delta(\epsilon)>0\right)$ such that
\begin{eqnarray*}
\left|\dfrac{ d_{K} (x + t y)- d_{K}(x)}{t } - f_{x-p(x)}(y)\right| < \epsilon~~~ \displaystyle {\rm ~for ~all~} x \in D(K) \cap B(\bar{x}, \bar{r}), 0< |t| < \delta.
\end{eqnarray*}

So,  for $x,z \in D(K) \cap B(\bar{x}, \bar{r})$ and for any $y \in X$
 with $ \left\|y\right\| =1,$ we have
\begin{eqnarray*}
\left|f_{z-p(z)}(y) - f_{x-p(x)}(y)\right|  & \leqslant &
\left|\dfrac{ d_{K} (z + t y)- d_{K}(z)}{t } - f_{z-p(z)}(y)\right| \\
 & + & \left|\dfrac{ d_{K} (z + t y)- d_{K}(z)}{t } - \dfrac{ d_{K} (x + t y)- d_{K}(x)}{t }\right| \\      &+ & \left|\dfrac{ d_{K} (x + t y)- d_{K}(x)}{t } - f_{x-p(x)}(y)\right|\\
 & < & 2 \epsilon + \frac{4}{\delta }\left\|z - x\right\| ~~~ \displaystyle {\rm ~for ~all~}  \frac{\delta}{2}< |t| < \delta\\
 & < & 6 \epsilon ~~~ \displaystyle {\rm ~for ~all~} \left\|z - x\right\|< \epsilon \delta.
\end{eqnarray*}
That is, the mapping $x \mapsto f_{x-p(x)}(y) (x \mapsto f_{x-p(x)} )$ is uniformly continuous on $D(K) \cap B(\bar{x}, \bar{r})$
 Since $D(K)$  is dense in $X \diagdown K$, this mapping has a unique continuous
extension on $B(\bar{x}, \bar{r})$, see Rudin \cite[page 99, exc. 13]{Rud}. But this implies that for any $x \in B(\bar{x}, \bar{r})$ and sequence $\left\{z_n\right\}$ in $D(K) \cap B(\bar{x}, \bar{r})$ converging to $x$, the sequence  $\left\{f_{z_n-p(z_n)}\right\}$ is $w^*$-convergent (respectively, norm convergent). Since the norm on $X$ is LUR, it follows from Remark \ref{rem_L} that $T'(K)$ is dense in $X$. Hence, by Proposition \ref{Prop-1} the distance function  $d_{K}$ is strictly smooth (respectively, Fr\'echet smooth) at $x$.
\end{proof}
Let us denote by $ E_r (K)$ the set
$$\left\{ x_{\circ}-r \frac{x_{\circ}-p(x_{\circ})}{\left\| x_{\circ}-p(x_{\circ}) \right\|} : x_{\circ} \in E(K),  p(x_{\circ}) \in P_{K}(x_{\circ}) ~ {\rm and}~  \left\| x_{\circ}-p(x_{\circ}) \right\| = d_K (x_{\circ})> r \right\}.$$
\hspace*{0.5cm} In Theorem \ref{thm01}, if we take $D(K)= E_r(K)$ for some $r > 0 $, then Theorem \ref{thm01} concludes for a particular situation, which is of our interest.
\begin{Corollary}
\label{cor01} Let $X$ be a normed linear space with LUR and  smooth {\rm(}Fr\'echet smooth{\rm )} norm.
Let $K$ be a nonempty closed almost proximinal set, let for some $r > 0$ the set $E_r(K)$ be dense in $X \diagdown K$,  and let $d_{K}$ be uniformly
smooth {\rm(}uniformly Fr\'echet smooth{\rm )} on $E_r(K)$. Then
the distance function $d_{K}$ is strictly smooth {\rm(}and Fr\'echet smooth{\rm)}  on $X\diagdown K.$
\end{Corollary}
One of the main results of this paper is to investigate the conditions on a nonempty closed set $K$ such that $E_r (K)$ is dense in $X  \diagdown K.$  Simple  observation reveals that for an almost sun $K$ the set $ E_r (K)$ is dense in $X  \diagdown K$. Indeed, we have the following result.
\begin{Lemma}
\label{lem03}
Let $K$ be a closed almost sun in a normed linear space $X$. Then for every $r > 0,$ the set $ E_r (K)$ is dense in $X  \diagdown K.$
\end{Lemma}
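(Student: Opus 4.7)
The plan is to use the almost sun hypothesis directly: given $y \in X \diagdown K$ and $\epsilon > 0$, I want to produce a point of $E_r(K)$ within $\epsilon$ of $y$. The natural candidate is obtained by taking a nearby ``sun-like'' point, pushing it outward by $r$ along the ray away from its nearest point, and observing that the corresponding ``pull back by $r$'' from the definition of $E_r(K)$ returns us to that sun-like point.

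First I would let $D$ denote the dense subset of $X \diagdown K$ on which the almost sun property holds, so every $x \in D$ has a closest point $p(x) \in K$ and $p(x)$ remains a closest point of $x + t\vec{x}$ for all $t \geqslant 0$. Since $D$ is dense in $X \diagdown K$ and $y \in X \diagdown K$, I can choose $x \in D$ with $\|x - y\| < \epsilon$. Because $K$ is closed and $x \notin K$, we have $d_K(x) > 0$, so $\vec{x} = (x - p(x))/\|x - p(x)\|$ is well defined.

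Next I set $x_{\circ} := x + r \vec{x}$. The sun property gives $p(x) \in P_K(x_{\circ})$, and the simple identity
\[
x_{\circ} - p(x) = \bigl(x - p(x)\bigr) + r \vec{x} = \bigl(d_K(x) + r\bigr) \vec{x}
\]
yields $d_K(x_{\circ}) = \|x_{\circ} - p(x)\| = d_K(x) + r > r$. Hence $x_{\circ}$ satisfies every condition in the definition of $E_r(K)$ with this particular choice of $p(x_{\circ}) = p(x)$, and $\vec{x}_{\circ} = \vec{x}$. Consequently $x_{\circ} - r \vec{x}_{\circ} = x$, exhibiting $x$ as a point of $E_r(K)$ with $\|y - x\| < \epsilon$.

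The argument is essentially bookkeeping around the definitions, so no serious obstacle arises. The only minor point to watch is the strict inequality $d_K(x_{\circ}) > r$, which relies on $d_K(x) > 0$; this is automatic because $x$ was chosen in $X \diagdown K$ and $K$ is closed.
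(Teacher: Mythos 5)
Your proof is correct and takes essentially the same approach as the paper: both exploit the sun property to push a point $x$ outward to $x_{\circ}=x+r\vec{x}$, note $d_K(x_{\circ})=d_K(x)+r>r$ and $\vec{x}_{\circ}=\vec{x}$, and conclude $x=x_{\circ}-r\vec{x}_{\circ}\in E_r(K)$. The paper's version merely parametrizes the shift as $r-t$ with $|t|<\epsilon$, so your argument is the clean special case $t=0$, which already suffices since it shows every sun point lies in $E_r(K)$.
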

\begin{proof} Let $ S(K)$ denote the set of solar points of
$K$ . Since $K$ is an almost sun, the  set $ S (K)$ is dense in $X \diagdown K$. Therefore, it suffices to prove that $ E_r (K)$ is dense in $S(K).$\\
Let $y \in S(K)$ be any point with $p(y) \in P_{K}(y)$ (a \emph{luminosity point}, see, e.g., \cite[p. 12]{Alim}). For all $0< \epsilon < r $ and $|t|< \epsilon$, if we choose $$x_{\circ} = y + (r-t) \frac{y -p(y)}{\left\| y -p(y) \right\|}. $$
Then $x_{\circ} $ is also in  $S(K)$ and by definition of a sun $p(y)\in P_{K}(y)$ is a nearest point for $x_{\circ} $. Indeed, $P_{K}(y) \subset P_{K}(x_{\circ})$  that is there is a point $p(x_{\circ}) \in P_{K}(x_{\circ})$ such that $p(y) = p(x_{\circ})$,  and
$d_K(x_{\circ})= \left\| x_{\circ}-p(x_{\circ}) \right\|= \left\| x_{\circ}-p(y) \right\|= \left\| y-p(y) \right\|-t+r > r$.\\
 Since $\displaystyle \frac{x_{\circ}-p(x_{\circ})}{\left\| x_{\circ}-p(x_{\circ}) \right\|}= \frac{y-p(y)}{\left\| y-p(y)\right\|}$,  we have
\begin{eqnarray*}
&& x = x_{\circ}- r \frac{x_{\circ}-p(x_{\circ})}{\left\| x_{\circ}-p(x_{\circ}) \right\|} \in E_r (K)\\
&&~~ = x_{\circ}- r \frac{y-p(y)}{\left\| y-p(y) \right\|}\\
\text{that is,}&& x = y- t \frac{y-p(y)}{\left\| y-p(y)\right\|}, \\
\text{consequently,}&& \left\| x-y \right\|= |t|< \epsilon.
\end{eqnarray*}
Thus, for $y \in S(K)$ and $0 < \epsilon < r $, the point $\displaystyle x= y-t \frac{y-p(y)}{\left\| y-p(y)\right\|}$ is in $ E_r (K)$ for all $|t|< \epsilon $ such that $\left\| x-y \right\| < \epsilon.$ This proves that $ E_r (K)$ is dense in $S(K)$ for all $r>0$.
\end{proof}

Using Lemma \ref{lem03} and Corollary \ref{cor01}, we can deduce the differentiability of $d_{K}$ on $X\diagdown K,$ if it is uniformly differentiable on some dense set $ E_r (K).$
\begin{Corollary}
\label{core-LUR}
Let $X$ be a normed linear space with LUR and  smooth {\rm(}Fr\'echet smooth{\rm )} norm,
and $K$ be a nonempty closed almost sun in $X$.  Suppose $d_{K}$ is uniformly smooth {\rm(}uniformly Fr\'echet smooth{\rm )} on the set $E_r(K)$  for some $r > 0$. Then the distance function $d_{K}$ is strictly smooth {\rm(}and Fr\'echet smooth{\rm)}  on $X\diagdown K.$
\end{Corollary}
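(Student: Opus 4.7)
The Corollary is a direct synthesis of the two immediately preceding results, so my plan is to verify that the hypotheses of Theorem \ref{thm01} are satisfied under the assumptions and then invoke it.

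The first step is to note that an almost sun is, by definition, almost proximinal (the sun property is imposed on a set of points where closest points already exist), so the hypothesis of Theorem \ref{thm01} that $K$ be nonempty closed and almost proximinal is met automatically. Since $X$ has a LUR norm, the remark following Proposition \ref{Prop-1} (i.e.\ Dutta's theorem) even gives density of $E'(K)$ in $X\setminus K$, which is a stronger statement than we need; the important density for Theorem \ref{thm01} is that of $E_r(K)$.

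The second step is to obtain density of $E_r(K)$. This is precisely Lemma \ref{lem03}: since $K$ is an almost sun, the set $E_r(K)$ is dense in $X\setminus K$ for every $r>0$. In particular, it is dense for the specific $r>0$ furnished by the hypothesis of the Corollary, namely the one on which $d_K$ is uniformly smooth (respectively uniformly Fr\'echet smooth).

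The third step is the application. With $X$ LUR and smooth (Fr\'echet smooth), $K$ nonempty closed and almost proximinal, $E_r(K)$ dense in $X\setminus K$, and $d_K$ uniformly smooth (uniformly Fr\'echet smooth) on $E_r(K)$, all hypotheses of Theorem \ref{thm01} are in place. The theorem then yields strict smoothness (and Fr\'echet smoothness) of $d_K$ at every $x \in X\setminus K$, which is the conclusion. There is no genuine obstacle here: the content of the Corollary lies entirely in the earlier lemma and theorem, and this proof is a two-line citation.
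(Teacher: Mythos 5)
Your proof is correct and coincides with the paper's own (implicit) argument: the Corollary is presented as an immediate consequence of Lemma \ref{lem03} (density of $E_r(K)$ for an almost sun) combined with Theorem \ref{thm01}, exactly as you lay out. Your added observation that an almost sun is automatically almost proximinal is a sensible bit of bookkeeping the paper leaves unstated.
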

It may be worthy to find an almost sun which is not a sun. Dunham\cite{Dnhm} constructed Chebyshev sets in ${\cal C}[0,1]$ which are not suns. Braess\cite{Bras} has given $N$-parameter Chebyshev sets which are not suns whereas Dunham's sets were of 1-parameter and 2-parameters. We refer \cite[p. 32]{Alim} for more such sets. For an overview, we consider the following from Mhaskar and Pai \cite[page 474]{Pai}, where $v(\alpha,t)$ is a particular choice of the function $F(a,\cdot)$ given in \cite[Theorem 1]{Dnhm}.
 \begin{Example}
{\rm  Following Dunham\cite{Dnhm}, consider the real Banach space $ X = {\cal C}[0,1]$ with sup norm.\\\\ Let $K =\{v(\alpha,t)| \alpha \geqslant 0\}$,
where  $v(\alpha, t)=\begin{cases}
\dfrac{2 + \alpha}{1 + t/\alpha} &\text {if $\alpha > 0 $}\\
0  &\text {if $\alpha=0$},
\end{cases}$
for $t$ in $[0,1]$}.
\end {Example}
Then $K$ is a Chebyshev set, see the details in \cite{Pai} or \cite[Theorem 1]{ Dnhm}. Since $0$ is an isolated point of $K$, hence  $K$ is not a sun.

It is worth mentioning here that the Chebyshev set $K$ is locally compact. Also, we show that every negative function is a solar point of  $K$ . Let $y(t) < 0$ for all  $t \in [0,1]$.
\begin{eqnarray*}
\text{Then, clearly}&&  p(y) = 0 ~ \text{on} ~[0,1]. \\
\text{Now, for}~  \lambda  \geqslant 0, && y_{\lambda} =y+ \lambda  (y - p(y) ) \\
\text{that is ,}&&  y_{\lambda} =  (1 + \lambda) y < 0.\\
\end{eqnarray*}
This shows that $ y_{\lambda}$ has the same nearest point $p(y) = 0$ in $K$ for every $\lambda>0.$ Hence, every negative
function is a solar point.\\
\hspace*{0.5cm} On the other hand, any function in $X  \diagdown K$ which is positive somewhere can not be a solar point. For illustration, suppose that $x \in X  \diagdown K$ is such that $\min x(t) > 1$, then $x$ has a unique nearest point $p(x)= v(\alpha, t), \alpha > 0$. It is easy to see that $x+ \lambda  (x - p(x) )$ can not have the $p(x)$ as a nearest point for all $\lambda > 0$.\\
\hspace*{0.5cm} Note that the set of negative functions $G_{\rm neg}$ is open in $X$. Of course, $G_{\rm neg}$ is not dense in $X\diagdown K$. Nevertheless, we have an exact larger set $G_{\rm neg}$ as the set of solar points of  $K$. Thus, the Chebyshev set of Dunham \cite{Dnhm} encourages the search for almost sun which is not
a sun. Unfortunately, we have been unable to construct an example of an almost sun which is not a sun. Such a search must be in non-Hilbert spaces.\\\\
\hspace*{0.5cm}We observe that the notion of almost sun is not merely a tidier form for Giles \cite[page 462]{Giles1}, but it also provides a non-trivial illustration of the  proximinal condition. Moreover, Lemma \ref{lem03} motivates to improve the results of Giles \cite[page 462]{Giles1}.
\begin{Theorem}
\label{thm1_open}
Let $X$ be a normed linear space with uniformly smooth (uniformly Fr\'echet smooth) norm, and $K$ be a closed almost sun in $X$. Then $d_K$ is  smooth (Fr\'echet smooth) on $X  \diagdown K.$
\end{Theorem}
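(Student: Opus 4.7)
The plan is to reduce the theorem to the combination of Lemma~\ref{lem03} with the proof machinery underlying Theorem~\ref{thm01}. Lemma~\ref{lem03} supplies the density of $E_{r}(K)$ in $X\diagdown K$ for every $r>0$; its proof in fact produces a dense family of points $x=x_{\circ}-r\vec{x}_{\circ}$ for which the sun property holds at $x_{\circ}$. The main work is to show that on this dense family the sun structure, together with the uniform (Fr\'echet) smoothness of the norm, forces uniform (Fr\'echet) smoothness of $d_{K}$.

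Fix $\bar x\in X\diagdown K$, choose $\bar r<d_{K}(\bar x)/2$ so that $d_{K}\geqslant d_{K}(\bar x)/2$ on $B(\bar x,\bar r)$, and let $x=x_{\circ}-r\vec{x}_{\circ}$ be any point of the dense family lying in $B(\bar x,\bar r)$. The sun property at $x_{\circ}$ together with the elementary fact that $p(x_{\circ})$ is a nearest point to every point of the segment $[p(x_{\circ}),x_{\circ}]$ gives $p(x_{\circ})\in P_{K}(x)$ and hence $d_{K}(x)=\|x-p(x_{\circ})\|$, and it yields the global bounds
\begin{eqnarray*}
d_{K}(x_{\circ})-\|x_{\circ}-y\|\;\leqslant\; d_{K}(y)\;\leqslant\;\|y-p(x_{\circ})\|
\end{eqnarray*}
for every $y\in X$. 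Setting $y=x+th$ with $\|h\|=1$, subtracting $d_{K}(x)$ and dividing by $t>0$ sandwich the difference quotient $\bigl(d_{K}(x+th)-d_{K}(x)\bigr)/t$ between $\bigl(r-\|r\vec{x}_{\circ}-th\|\bigr)/t$ and $\bigl(\|d_{K}(x)\vec{x}_{\circ}+th\|-d_{K}(x)\bigr)/t$. After the rescalings $v=t/r$ and $u=t/d_{K}(x)$ both outer quotients become difference quotients of the norm at the unit vector $\vec{x}_{\circ}$, each converging to $f_{\vec{x}_{\circ}}(h)$. Uniform (Fr\'echet) smoothness of the norm on the unit sphere, combined with the lower bounds $r>0$ and $d_{K}(x)\geqslant d_{K}(\bar x)/2$, makes this convergence uniform in $x$ (and also in $h$ in the Fr\'echet case). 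Consequently $d_{K}$ is uniformly (uniformly Fr\'echet) smooth on the intersection of the dense family with $B(\bar x,\bar r)$, with derivative $f_{\vec{x}_{\circ}}$ at $x$.

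From here the proof of Theorem~\ref{thm01} propagates smoothness to every $x\in B(\bar x,\bar r)$: the map $z\mapsto f_{\vec{z}}$ is uniformly continuous on the dense subset of $B(\bar x,\bar r)$ (in the $w^{\ast}$-topology in the smooth case, in norm in the Fr\'echet case), extends uniquely by density to a map $\tilde f$ on $B(\bar x,\bar r)$, and passing to the limit, along any approximating sequence $z_{n}\to x$ in the dense subset, in the uniform-smoothness inequality $\bigl|\bigl(d_{K}(z_{n}+ty)-d_{K}(z_{n})\bigr)/t-f_{\vec{z}_{n}}(y)\bigr|<\epsilon$ identifies $\tilde f(x)$ as the G\^ateaux (Fr\'echet) derivative of $d_{K}$ at $x$. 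The main obstacle I anticipate is that Theorem~\ref{thm01} is formally stated under an LUR hypothesis, invoked there only through Proposition~\ref{Prop-1}, whereas no LUR is assumed in the present theorem. I would circumvent this by observing that Proposition~\ref{Prop-1} is used in Theorem~\ref{thm01} solely to identify $f_{\vec{z}}$ with $d'_{K}(z)$ at the approximating points, and in our setting that identification is already furnished by the previous paragraph directly at every point of the dense family; the extension argument thus closes without any appeal to $E'(K)$.
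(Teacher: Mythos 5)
Your proposal is correct and follows essentially the same route as the paper: the paper's own proof consists only of invoking Lemma~\ref{lem03} to obtain the density of $E_r(K)$ and then deferring entirely to Giles' argument, which is precisely the sandwich estimate on $E_r(K)$ plus the density-propagation machinery that you reconstruct. Your explicit treatment of the fact that Theorem~\ref{thm01} formally assumes LUR (not available here) --- identifying the derivative $f_{\vec{x}_{\circ}}$ directly on the dense family via the sandwich rather than through Proposition~\ref{Prop-1} --- is faithful to what Giles actually does and fills in a step the paper leaves implicit.
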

\begin{proof}
In the Giles proof, it requires only the denseness of $ E_r (K)$ in $X  \diagdown K $ for some $r>0$,
 which follows from Lemma \ref{lem03} for the almost sun $K.$
\end{proof}
The above Theorem  \ref{thm1_open} enables us to give a better characterization for the convexity of a set. Which follows directly from  Corollary \ref{cor01} and Proposition \ref{prop_open}.
Before that, we need the following notion of $\delta$-sun introduced by Vlasov\cite{Vlas3}.\\\\
A nonempty closed set $K$ is called a $\delta$-sun if for every point $x \in X  \diagdown K $ there exists a sequence $z_n \rightarrow x, z_n \neq x$
such that
\begin{equation}
\label{delsun}
\lim_{n\rightarrow \infty}\frac{d_K(z_n)-d_K(x)}{\|z_n-x\|}=1
\end{equation}
It is easy to see that an almost sun is a $\delta$-sun. The following result relates to LUR and $\delta$-sun.
\begin{Theorem}
\label{thm-lur}
(\cite[Theorem 1.2 (d,e)]{Balag})Let $X$ be a Banach space, and $K$ be a nonempty closed subset of  $X$. If $X$ is LUR, then
 $\delta'S \subset T'(K)$, $\delta S \cap E(K)\subset T'(K)$ and $\delta' S \cap E(K) \subset T(K) \cap AC(K) = T'(K)$. \\
 (Here, $\delta S$ is the set of points $x \in X  \diagdown K $ where $K$ is a $\delta$-sun,  and $\delta' S$ is the set of points $x \in X  \diagdown K$ for which there exist $h$ with $\|h\|=1$, $t_n \downarrow 0$ such that for $z_n= x+t_n h$ we have  (\ref{delsun})).
\end{Theorem}
\begin{Remark}
Note that for the set of solar points $S(K)$, we have  $S(K) \subset \delta S$. For an almost sun $K$, $S(K)$is dense in $X  \diagdown K$ and $S(K) \subset  E(K)$, hence if $K$ is an almost sun and $X$ is LUR then  density of $T'(K)$ in $X  \diagdown K$ follows from Theorem \ref{thm-lur}. However, $T'(K)$ is known
to be dense in $X  \diagdown K$  under weaker constraints ( see \cite[Theorem 4]{Dutt1}).
\end{Remark}
\begin{Theorem}
\label{thm-c}
Let $X$ be a Banach space, and $K$ be a nonempty closed subset of  $X$. Then $K$ is convex if any one of the following conditions holds:
\begin{description}
\item {\rm(i)} $X$ has uniformly smooth norm and  $K$ is an almost sun.
\item {\rm(ii)} $X$ is LUR with rotund dual $X^\ast,$ the distance function $d_K$ is uniformly smooth on the set $E_r(K)$  for some $r > 0$ which is dense in $X  \diagdown K  $, and satisfies $\left\| d'_K(x) \right\|=1$ for all $x \in X \diagdown K.$
\item {\rm(iii)} $X$ has LUR and Fr\'echet smooth norm with rotund dual $X^\ast$, the distance function $d_K$ is uniformly Fr\'echet smooth on the set $E_r(K)$  for some $r > 0$ which is dense in $X  \diagdown K$.
\end{description}
\end{Theorem}
\begin{proof}
(i) M. Day \cite{Day} showed that $X$ is uniformly smooth if and only if its dual space  $X^\ast$  is uniformly convex. Hence, $X^\ast$ is rotund. It is easy to verify that an almost sun is a $\delta$-sun, so $K$ is a  $\delta$-sun. Invoking \cite[Theorem 4.2]{Alim}, the $\delta$-sun $K$ is convex.\\\\
(ii) Since the dual space $X^\ast$  is strictly convex, the norm on $X$ is smooth. Invoking Corollary \ref{cor01}, we see that $d_K$ is smooth on $X  \diagdown K$. Invoking Proposition \ref{prop_open}, the set $K$ is convex.\\
(iii) Similar to the proof as above in (ii).
\end{proof}

\begin{Remark}
The assertions $(ii)$ and $(iii)$ in the above Theorem have a great analogy to \cite[Theorem 1.4]{Balag}. However, our results  $(ii)$ and $(iii)$  can not be deduced using \cite[Theorem 1.4]{Balag}.
\end{Remark}

Observe  that if the distance function $d_K$ generated by a proximinal set $K$ is smooth on $X \diagdown K,$ then we have$\left\| d'_K(x) \right\|=1$ for all $x \in X \diagdown K.$
So if $X$ has uniformly smooth norm, then every  proximinal set $K$ is convex provided that  $E_r(K)$ is dense in $X\diagdown K$ for some $r > 0$.This implies that every proximinal and almost sun $K$
is convex.\\
	


The following result asserts that proximinality is equivalent to Chebyshev property for almost sun.
\begin{Theorem}
Let $X$  be a normed linear space with uniformly smooth and rotund norm. Then a closed almost sun $K$ is Chebyshev if and only if $K$ is proximinal.
\end{Theorem}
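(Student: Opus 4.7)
The plan is to split the biconditional and observe that the nontrivial direction is essentially the uniqueness argument that was already executed in the paragraph preceding the statement.

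The implication Chebyshev $\Rightarrow$ proximinal is immediate, because a singleton $P_K(x)$ is in particular nonempty. For the converse, I would assume that $K$ is proximinal and almost sun in a uniformly smooth and rotund space $X$, fix an arbitrary $x \in X \diagdown K$, and show that $P_K(x)$ must be a singleton. Proximinality gives $P_K(x) \neq \emptyset$, so it is enough to rule out the presence of two distinct closest points.

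Suppose for contradiction that $p_1(x), p_2(x) \in P_K(x)$ are distinct, and let $\vec{x}_i$ denote the unit vector in the direction of $x - p_i(x)$, so $\vec{x}_1 \neq \vec{x}_2$. Since the norm of $X$ is uniformly smooth (hence smooth) and $K$ is almost sun, Theorem \ref{thm1_open} yields that $d_K$ is smooth at $x$; in particular $\partial d_K(x)$ is the singleton $\{d'_K(x)\}$. Applying Lemma \ref{lm0} separately to $p_1(x)$ and $p_2(x)$ produces $f_{\vec{x}_1}, f_{\vec{x}_2} \in \partial d_K(x)$ with $f_{\vec{x}_i}(\vec{x}_i) = 1$, so $f_{\vec{x}_1} = f_{\vec{x}_2} = d'_K(x)$. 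Then
\begin{equation*}
f_{\vec{x}_1}\!\left(\tfrac{\vec{x}_1+\vec{x}_2}{2}\right) = \tfrac{1}{2}\bigl(f_{\vec{x}_1}(\vec{x}_1)+f_{\vec{x}_1}(\vec{x}_2)\bigr) = 1,
\end{equation*}
which forces $\bigl\|\tfrac{\vec{x}_1+\vec{x}_2}{2}\bigr\| = 1$ and contradicts rotundity of the norm. Hence $\vec{x}_1 = \vec{x}_2$, and combined with $\|x - p_1(x)\| = d_K(x) = \|x - p_2(x)\|$ this gives $p_1(x) = p_2(x)$.

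There is no real obstacle, since the smoothness-plus-rotundity uniqueness step has already been carried out in the paragraph preceding the theorem; the only mild point is to observe that that argument uses nothing about the particular $x$ beyond its having at least one closest point, so full proximinality upgrades the conclusion from ``every point with a closest point has a unique one'' to ``$K$ is Chebyshev.'' The two items one has to keep straight are that the smoothness being invoked is that of $d_K$ on $X \diagdown K$ (coming from Theorem \ref{thm1_open}, not from the norm directly), and that Lemma \ref{lm0} supplies the two competing subgradients in $\partial d_K(x)$ which smoothness then collapses.
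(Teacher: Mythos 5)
Your proposal is correct and follows essentially the same route as the paper: the paper's proof is precisely the uniqueness argument in the paragraph preceding the theorem (smoothness of $d_K$ at $x$ via Theorem \ref{thm1_open}, the two subgradients $f_{\vec{x}_1}, f_{\vec{x}_2}$ collapsing to $d'_K(x)$, and the midpoint evaluation contradicting rotundity), combined with the trivial observation that Chebyshev implies proximinal. Your version merely makes explicit the role of Lemma \ref{lm0} in supplying the competing subgradients, which is a faithful reading of what the paper leaves implicit.
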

\begin{proof} By definition, a Chebyshev set is proximinal. Now, suppose that the set $K$ is proximinal, we show that $K$ is Chebyshev.
To see this,  suppose that  a point $x \in X \diagdown K$
has two nearest points $p_1(x), p_2(x) \in  K.$ If $x_1$ and $x_2$ denote the unit vectors in the direction of $x-p_1(x)$ and $x-p_2(x)$ respectively. Since the norm of  the space $X$  is uniformly smooth and rotund and $K$ is an almost sun,
using Theorem \ref{thm1_open}, it follows that  $d_K$ is smooth at $x$ and
  $d'_K(x) = f_{x_1} = f_{x_2}.$  Since $ f_{x_1} (\frac{x_1+ x_2}{2}) = \left( \frac{ f_{x_1} (x_1)+  f_{x_1}(x_2)}{2} \right)= 1$, which implies that $\| \frac{x_1+ x_2}{2} \|= 1$, a contradiction to the rotundity. This proves the uniqueness of the nearest point.
  \end{proof}
Since every Hilbert space is rotund and uniformly smooth, and every Chebyshev set is proximinal, we have a partial result regarding the convexity of Chebyshev sets in a Hilbert space.
\begin{Corollary}
\label{thm-ref}
In  a Hilbert space every Chebyshev set which is almost sun is convex.
\end{Corollary}
%
Thus the problem of convexity of Chebyshev set in a Hilbert space is equivalent to the existence of a Chebyshev set $K$ which is not a sun at every point of some open ball in $X \diagdown K.$\\\\
It is known that in any reflexive Banach space $X$ with Kadec norm (such
spaces are called Efimov-Stechkin spaces \cite{Alim}), every nonempty closed set K has a set $ E(K)$ dense in $X \diagdown K $, see  Lau \cite[page 794]{Lau78}.  In particular, every Hilbert space has this property.  If for some $r>0$ the set $E_r (K)$ is dense in $X  \diagdown K$ then $ E(K)$ is dense in $X  \diagdown K $. Form Theorem \ref{thm-c} and succeeding discussions, it follows that if the converse were true, in  a Hilbert space every Chebyshev set must be convex.\\


It is easy to verify that the Vlasov's differentiability condition is a consequence of the almost sun property and so we have the following  result which is more general than the above Corollary.\\
\begin{Theorem}
\label{thm-1}
 Let $X$ be a Banach space with rotund dual $X^{\ast}$ and let $K$ be  nonempty closed set in  $X$. If $K$ is an almost sun, then $K$ is convex.
\end{Theorem}
\begin{proof}
Since $K$ is almost sun, by Lemma \ref{lem03},  for every $r>0$ the set  $ E_r (K)$ is dense in $X  \diagdown K $. Therefore, the proof follows from the ending theorem of Giles\cite[page 463]{Giles1} .
\end{proof}
\begin{Remark}
Since an almost sun is $\delta$-sun, and in a Banach space a $\delta$-sun is convex if and only if $X^\ast$ is strictly convex, see, e.g., \cite[Theorem 4.2]{Alim}.
 Hence, the above result is a simple corollary of \cite[Theorem 4.2]{Alim}. But we have provided another proof for the result under a slightly stronger condition.
\end {Remark}

Since a convex proximinal set is a sun, we have the following  characterization and equivalent conditions for convexity of Chebyshev and proximinal sets respectively.
\begin{Corollary}
 Let $X$ be a Banach space with rotund dual $X^{\ast}$, and let $K$ be a Chebyshev set in  $X$. Then  $K$ is convex if and only if it is an almost sun.
\end{Corollary}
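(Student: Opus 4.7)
The plan is to prove the equivalence by splitting it into its two implications. The ``if'' direction is essentially immediate from the Theorem just preceding the Corollary: a Chebyshev set is in particular a nonempty closed (and proximinal) subset of the Banach space $X$ with rotund dual $X^{\ast}$, so if additionally $K$ is an almost sun, that Theorem directly yields the convexity of $K.$

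For the ``only if'' direction, assume $K$ is convex (and Chebyshev). As the sentence introducing the Corollary records, a convex proximinal set is a sun, and a Chebyshev set is proximinal by definition, so $K$ is in fact a sun on all of $X \diagdown K$ and therefore trivially an almost sun. The one-line justification is the standard separation argument: given $x \in X \diagdown K$ with nearest point $p(x),$ separating the open ball $B(x, d_K(x))$ from the convex set $K$ produces a norm-one $f \in X^{\ast}$ with $f(x - p(x)) = \|x - p(x)\|$ and $f(k - p(x)) \leqslant 0$ for every $k \in K.$ With $\vec{x} = (x - p(x))/\|x - p(x)\|$ one has $f(\vec{x}) = 1,$ so for every $t \geqslant 0$ and every $k \in K,$
\begin{equation*}
\|x + t\vec{x} - k\| \geqslant f\bigl(x + t\vec{x} - k\bigr) = \|x - p(x)\| + t + f\bigl(p(x) - k\bigr) \geqslant \|x - p(x)\| + t = \|x + t\vec{x} - p(x)\|,
\end{equation*}
so $p(x)$ remains a nearest point of $x + t\vec{x}$ in $K,$ confirming the sun property.

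Combining the two directions settles the Corollary. There is no substantive obstacle: the non-trivial implication is subsumed in the preceding Theorem (which in turn rested on Lemma \ref{lem03} and Proposition \ref{prop_open}), and the converse is a routine separation-plus-triangle-inequality computation that is already flagged in the Corollary's introductory sentence. Accordingly, the Corollary is best presented as a direct packaging of these two facts rather than as an independent proof.
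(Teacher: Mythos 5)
Your proposal is correct and follows exactly the route the paper intends: the ``if'' direction is the preceding Theorem applied to the (nonempty, closed) Chebyshev set, and the ``only if'' direction is the standard fact, flagged in the paper's introductory sentence to the Corollary, that a convex proximinal set is a sun and hence an almost sun. Your separation argument correctly fills in the details the paper leaves unstated.
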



\begin{Remark} It is worth mentioning that in any smooth Banach space a proximinal set is convex if and only if it is a sun, see e.g. \cite[Theorem 4.1]{Alim}.\\
  From Theorem \ref{thm-1} it follows that in any  Banach space $X$ with rotund dual $X^{\ast}$,  a proximinal set  $K$ in $X$ is convex if and only if it is an almost sun.  It is also worth mentioning that this result partially extends \cite[Corollary 3]{Bala88}.
\end {Remark}


%
%
%
%
%
%
%

\textbf{Acknowledgment:}
 The author is indebted to the anonymous referee for his meticulous reading of the initial manuscript, several valuable and helpful suggestions and comments with appropriate references, which significantly improved the quality of the paper.\\\\
\textbf{Compliance with ethical standards}\\
\textbf{Ethical approval}\\The work included in this research paper comprises the results of the independent and original investigation carried out. The material obtained (and used) from other resources has been duly acknowledged in the reference.\\\\
\textbf{Funding:} This study was funded by UGC-BSR Start-Up Grant No. F.30.12/2014(BSR), UGC, Govt. of India, New Delhi-110012.\\\\
\textbf{Ethical approval}\\
This article does not contain any studies with human participants performed by any of the authors.

Triloki Nath\\
Department of Mathematics and Statistics, School of Mathematical and Physical Sciences, Dr. Harisingh Gour Vishwavidyalaya, Sagar, Madhya Pradesh-470003, India.\\
\emph{E-mail address}: tnverma07@gmail.com
\end{document}